\newtheorem{theo}{Theorem}[section]
\newtheorem{prop}[theo]{Proposition}
\numberwithin{equation}{section}
\newcommand{\CA}{\mathcal{A}} 
\newcommand{\T}{\mathbb{T}} 
\newcommand{\oh}{{\tfrac{1}{2}}} 
\newcommand{\C}{\mathbb{C}} 
\newcommand{\sq}{\unskip\nobreak\kern5pt\nobreak\vrule height4pt width4pt depth0pt} 
\newbox\ncintdbox \newbox\ncinttbox
\title{Gauss-Bonnet for matrix conformally rescaled Dirac}
\author[M.\ Khalkhali]{Masoud Khalkhali}
\address{Department of Mathematics, University of Western Ontario, London Ontario, N6A 5B7, Canada.}
\email{masoud@uwo.ca}
\author[A.\ Sitarz]{Andrzej Sitarz}
\address{Institute of Physics, Jagiellonian University,
	prof.\ Stanis\l awa \L ojasiewicza 11, 30-348 Krak\'ow, Poland.\newline\indent
	Institute of Mathematics of the Polish Academy of Sciences,
	\'Sniadeckich 8, 00-950 Warszawa, Poland.}
\email{andrzej.sitarz@uj.edu.pl}   
\begin{document}
\maketitle
\begin{abstract}
We derive an explicit formula for the scalar curvature over a two-torus with a Dirac operator
conformally rescaled by a globally diagonalizable matrix. We show that the Gauss-Bonnet theorem
holds and extend the result to all Riemann surfaces with Dirac operators modified in the same 
way.  
\end{abstract}
\thispagestyle{empty}

\section{Introduction}

In this paper we consider a new class of noncommutative algebras that  lend themselves to spectral and geometric  analysis. In noncommutative geometry the metric 
structure on a noncommutative space $\CA$ is encoded in a spectral triple on $\CA$ and under suitable conditions, using the spectral data,   one can formulate a notion of scalar and Ricci curvature for $\CA$ \cite{CT11, FGK}. In recent years much progress has been made in understanding and computing  the  curvature invariants of   curved  noncommutative tori using spectral properties of the Dirac operator \cite{CM11,CT11,DS13,FK15,FGK14,FK13,FK12, FGK, F15}.
In this paper we consider a different class of noncommutative algebras, namely algebras of matrix valued  functions on a classical 2 dimensional Riemannian manifold and prove a Gauss-Bonnet theorem for them.  

Let $M$ be a two dimensional Riemannian manifold which we assume to be closed, connected and oriented with a fixed spin structure.   Let  $D: L^2(S) \to L^2(S)$  denote the Dirac operator of $M$ acting on the space  
of spinors.    
Consider the algebra $\CA = C^\infty(M) \otimes M_n(\mathbb{C})$ of smooth matrix valued  functions on $M$ and its diagonal representation on $\mathcal{H} = L^2(S) \otimes \mathbb{C}^n$.  Taking the diagonal action of the Dirac operator we obtain a spectral triple on $\CA$.  Let $h \in \CA$ be a positive
element. We use $h$ to perturb the spectral triple of $\CA$
in the following way. Consider the operator $D_h = h D h$ as 
a conformally rescaled Dirac operator. Although $D_h$ does not
have bounded commutators with the elements of $\CA$ we might 
still consider the resulting geometry as a twisted spectral triple \cite{FO16}
or pass to $h$ in the commutant of $\CA$ \cite{BCDS}. In either case the perturbation
is of the same type.  We would like to address the question:  does the Gauss-Bonnet theorem hold  for $D_h$?  We provide a positive answer to this question. 

Recall that the Gauss-Bonnet theorem  for two dimensional manifolds can be stated in terms of spectral zeta functions.  For $n=1$, that is $\CA = C^\infty(M),$ where one deals with a commutative algebra,  the spectral  zeta function $\zeta_D (s)$ associated to the Dirac  Laplacian $D^* D$  is defined  by 
\begin{equation*} 
\zeta_D (s) = \sum \lambda_{j}^{-s}, \,\,\, \quad \textnormal{Re}(s) > 1, 
\end{equation*}
where the  summation is over non-zero eigenvalues of $D^*D$. 
The zeta function is absolutely convergent and holomorphic for  $\textnormal{Re}(s) > 1$ and has a meromorphic continuation to $\mathbb{C}$ with a unique (simple) pole at $s=1.$  In particular 
$\zeta (0)$ is defined and it is well know that it  is a topological 
invariant. For example,  in the special case  where  $D=\bar{\partial}$ 
is the Cauchy-Riemann operator of the complex structure defined by the conformal class of the metric, we have  
\[ \zeta_D(0) + 1 = \frac{1}{12 \pi} \int_{M} R = \frac{1}{6}
\chi (M), \nonumber \]
where $R$ is the scalar curvature and $\chi (M)$ is the Euler-Poincar\'e characteristic. Thus $\zeta_D (0)$ is a topological invariant, and,  in particular, it remains invariant under  the conformal perturbation $g \to e^fg$ of the metric \cite{CM11}. 

For technical reasons, we  assume there is a unitary element $U \in \CA$ such 
that $ h = U H U^*, $ where  $H$ is a diagonal matrix. Then:
$$ h D h = U H U^* D U H U^* = U \left( H \left( D + U^* [D,U] \right) H \right) U^*. $$
Therefore the spectrum of $D_h$ is the same as the  spectrum of $D_{A,H} = H (D + A) H $, where $A$ is a matrix valued one-form. 

In this  paper we  show that the Gauss-Bonnet theorem  holds for the family of conformally rescaled Dirac operators with possible fluctuations 
$D_{A,H} = H (D + A) H $ where the rescaling is a diagonalizable matrix 
and we compute the local expressions for the scalar curvature. The main point
of using the diagonal matrix $H$ as the conformal rescaling is that $H$ 
commutes then with all its derivatives thus making the computations feasible.
 
In the computations we use the matrix valued pseudodifferential 
operators over the manifold as opposed to the methods used in 
\cite{IM17}. The results demonstrate that unlike in the case of 
higher residues there, the expressions for the value of the 
$\zeta$ function at $0$ are complicated also in the matrix 
case.

Among other examples of curvature computations for  noncommutative manifolds  we should mention 
the Moyal sphere \cite{ESW16} where the algebraic approach of \cite{Ro13}
was applied,  and the differential geometry based construction for the 4-sphere
\cite{AW17},   as well as the toric noncommutative manifolds \cite{Li15}.

\section{Computations for the torus}

Consider the canonical spectral triple for a flat two torus 
$M= \mathbb{R}^2/\mathbb{Z}^2.$  Its spin structure is defined
by the Pauli spin matrices $\sigma^1, \sigma^2$ and its  Dirac operator is 
$$ D = \sigma^1 \delta_1 + \sigma^2 \delta_2. $$
Here $\delta_1, \delta_2$ are the partial derivatives $\frac{1}{i}\frac{\partial}{\partial x}$  and  $\frac{1}{i}\frac{\partial}{\partial y}.$ 

To compute the resolvent kernel we work in the algebra of matrix valued pseudodifferential operators obtained by tensoring the algebra  ${\bf \Psi}$   of pseudodifferential operators  on a smooth manifold $M$ by the algebra of $n$ by $n$ matrices. Let  $ h = U H U^* $  be a conformal factor 
where  $H$ is a diagonal matrix-valued function and $U$ a unitary
matrix-valued function on a torus.

\subsection{Computing the resolvent}

We have the following form of the symbol of the operator
$D_{A,H}^2 = H (D+A) H^2 (D+A) H, $ 
$$ \sigma_{D_{A,H}^2} = a_2 + a_1 + a_0, $$
where:
\begin{equation}
\begin{aligned}
a_2 =& H^4 \xi^2, \\
a_1 =& i \epsilon_{ij} \sigma^3 2 H^3 \delta_i(H) \xi^j 
      + 4 H^3 \delta_i(H) \xi^i 
      - i \epsilon_{ij} \sigma^3 H^3 A_i H \xi^j \\ 
     & + H^3 A_i H \xi^i 
      + i \epsilon_{ij} \sigma^3 H A_i H^3 \xi^j 
      + H A_i H^3 \xi^i, \\ 
a_0 =&   H^4 (\Delta H)
       + H^3 A_j \delta_i(H)   
       - H^3 i \sigma^3 \epsilon_{ij} A_i \delta_j(H)
       + H^3 \delta_i(A_i) H
       + i \sigma^3 H^3 \epsilon_{ij} \delta_j(A_i) H \\
     & + 2 H^2 \delta_i(H) \delta_i(H)  
       + 2 H^2 \delta_i(H) A_i H
       + 2 H A_i H^2 \delta_i(H) 
       + 2 i \sigma^3 H^2 \epsilon_{ij} \delta_i(H) A_j H \\
     & + i \sigma^3 \epsilon_{ij} H A_i H^2 \delta_i(H)  
       + i \sigma^3 \epsilon_{ij} H A_i H^2 A_j 
       + H A_i H^2 A_i H.         
\end{aligned}
\label{bsym}
\end{equation} 

Then the first symbols of $(D_{H,a})^{-2} = b_0 + b_1 +b_2 + \cdots$
are:

\begin{equation}
\begin{aligned}
&b_0  = (a_2+1)^{-1}, \\
&b_1  = -  \left( b_0 a_1 +  \partial_k(b_0) \delta_k(a_2) \right) b_0, \\
&b_2  = - \left(  b_1 a_1 + b_0 a_0 +  \partial_k(b_0) \delta_k(a_1) 
+  \partial_k(b_1) \delta_k(a_2) + \oh \partial_k \partial_j (b_0) \delta_k \delta_j (a_2) \right) b_0.
\end{aligned}
\label{bsym2}
\end{equation} 

Here:
$$ b_0 = (1 + H^4 \xi^2)^{-1}, $$
and in the above formulas we have used that $H$ commutes with $\delta_i(H)$.

The computations of $b_2$ yield three parts, which we treat
separately: independent of $A$, linear in $A$, quadratic in $A$
and terms depending on the derivate of $A$.

\section{Curvature}

Apart from the computations of the Gauss-Bonnet term, it is interesting to
obtain an explicit formula for the local curvature term (or more accurately, 
for the curvature modified by the local measure). We use the noncommutative geometry setup to compute  the scalar curvature functional in the sense defined below.

A matrix-valued function $R: \T^2 \to M_n(\C)$we call a local curvature 
functional if for any $f$ from the algebra of matrix valued functions on 
the torus:
$$ \zeta_{f,D}(0) = \int_{\T^2} \hbox{Tr} \, f R, $$
where
$$ \zeta_{f,D} = \hbox{Tr} \; f |D|^{-s}.$$
Note that since we are working with the operator that is defined on the flat
torus, the integral is with respect to the standard flat metric, hence the
matrix valued function $R$ contains both the scalar curvature as well as 
the volume form arising from the metric defined by the Dirac operator 
$D$.

Using our assumptions and notation, for the Dirac operator, which has been 
conformally modified by a globally diagonalizable function, we obtain four 
contributions to the curvature functional term. In all computations below we
use the fact that $H$, $\delta_i(H)$ and $b_0$ commute with each other.

\subsection{Terms not depending on $A$}

First term, which is idependent of $A$ and contains only functions of $H$
and its derivatives $\delta_i(H)$, is therefore identical with the classical 
contribution ,
\begin{equation}
\begin{aligned}
b_2(H, \xi) = & 96\, b_0^5 \delta_i(H)\delta_i(H) H^{14} (\xi^2)^3 
                        - 136\,  b_0^4 \delta_i(H)\delta_i(H) H^{10} (\xi^2)^2 \\
& + 46\, b_0^3 \delta_i(H)\delta_i(H) H^{6} (\xi^2)
- 2\, b_0^2 \delta_i(H)\delta_i(H) H^{2}  \\
& - 8 \, b_0^4 \Delta(H) H^{11} (\xi^2)^2
  + 8 \, b_0^3 \Delta(H) H^{7} (\xi^2) 
  - \, b_0^2 \Delta(H) H^3,
\end{aligned}
\end{equation} 

Integrating out over the $\xi$ space and using
$$ \int_0^\infty \frac{r^{2k+1} dr}{  (1+a^2 r^2)^{2k+3} } = \frac{1}{2(k+1)a^{2(k+1)}} $$ 

we obtain:

\begin{equation}
R(H) = - \pi \left( \frac{1}{3} H^{-2} \delta_i(H) \delta_i(H) + \frac{1}{3} H^{-1} \Delta(H) \right).
\label{RH}
\end{equation}

In the subsequent computation we use Lesch rearrangement lemma
\cite{Le14} and denote the conjugation by $\Delta$ conjugation by 
$H^4$:
$$ \Delta(x) = H^{-4} x H^4.$$

\subsection{Terms linear in $A$}

We have,
$$
\begin{aligned}
b_2^{(1)}(H,A) = & 
    - b_0 h A_i  b_0 \delta_i(H)  H^2 
+ 5 b_0 h A_i b_0^2 \delta_i(H) H^6 \xi^2 
- 4 b_0 h A_i b_0^3 \delta_i(H) H^{10} (\xi^2)^2 \\
&   - b_0 H^3 A_i b_0 \delta_i(H)
+ 7 b_0 H^3 A_i b_0^2 \delta_i(H) H^4 \xi^2 
- 4 b_0 H^3 A_i b_0^3 \delta_i(H) H^8 (\xi^2)^2 \\
& + 3 b_0^2 H^5 A_i b_0 \delta_i(H)  H^2 \xi^2
 - 4 b_0^2 H^5 A_i b_0^2 \delta_i(H)  H^6 (\xi^2)^2 \\
& + b_0^2 H^7 A_i b_0 \delta_i(H) \xi^2 
- 4  b_0^2 H^7 A_i b_0^2 \delta_i(H) H^4 (\xi^2)^2 \\ 
\end{aligned}
$$
and
$$
\begin{aligned}
b_2^{(1)}(H,A) &= - 2 b_0 \delta_i(H)  H^2 A_i b_0 H 
    + 2 b_0^2 \delta_i(H)  H^4 A_i b_0 H^3 \xi^2  \\
& + 6 b_0^2 \delta_i(H)  H^6 A_i b_0 H \xi^2
   -  4 b_0^3 \delta_i(H)  H^8 A_i b_0 H^3 (\xi^2)^2 \\
&-  4 b_0^3 \delta_i(H)  H^{10} A_i b_0 H (\xi^2)^2.
\end{aligned}
$$
Explicit computations give first:
$$ R^{(1)}(H,A) =  \sum_{i=1,2} 2 \pi H G(\Delta)(A_i) \delta_i(H), $$
where $G$ is the following function:
$$ G(s) = 
{ (1+\sqrt{s})\sqrt{s} \over (s-1)^3 } 
\big( (s+1) \ln (s)  - 2 (s-1) \big), $$
and a second term,
$$ R^{(2)}(H,A) =  \sum_{i=1,2} -2 \pi H^{-2} \delta_i(H) G(\Delta)(A_i) H, $$
surprisingly with the same function $G(s)$.

Note that after taking the trace both terms shall cancel each other independently
of its value at $s=1$, which is $ G(1) = \frac{2}{3}$.

Therefore
$$ \hbox{Tr\ } \left( R^{(1)}(H,A)  + R^{(2)}(H,A)  \right) = 0. $$

\subsection{Terms linear in $\delta_i(A_i)$}

In this case we have:

$$
\begin{aligned}
b_2(H,\delta_i(A_j)) & =
   - b_0 H^3 \delta_i(A_i) b_0 H
   + b_0^2 H^5 \delta_i(A_i) b_0 H^3 \xi^2 \\
&+ b_0^2 H^7 \delta_i(A_i) b_0 H \xi^2,
\end{aligned}
$$

and again explicit integration over $\xi$ gives:
$$ \pi H^{-1} F(\Delta) (\delta_i(A_i)) H,$$
where
$$F = - \frac{(1+\sqrt{s}) \sqrt{s}}{(s-1)^2} \ln(s) + \frac{\sqrt{s}+1}{s-1}. $$
Again, it is not difficult to check that $F(1)=0$ and the expression vanishes
after we take the trace of it, so:

$$ \hbox{Tr\ } \left( R(H,\delta_i(A_j)) \right) = 0. $$

\subsection{Quadratic terms in $A_i$.}

We have:

$$ 
\begin{aligned}
b_2(H,A^2) &= - b_0 H A_i H^2 A_i b_0 H 
    + b_0 H A_i b_0 H^6 A_i b_0 H \xi^2 \\
& + b_0 H^3 A_i b_0 H^2 A_i b_0 H^3 \xi^2 .
\end{aligned}
$$

Integrating over $\xi$ we obtain:

$$
R(H,A^2) = - \pi H^{-1} Q(\Delta^{(1)},\Delta^{(2)})(A_i \cdot A_i) H
$$
where
$$
Q(s,t) = { \sqrt{s} (\sqrt{t} + s) \over (s-1) )(s-t) } \; \ln s 
- { \sqrt{s} \sqrt{s} \over (s-t) \sqrt{t} } \; \ln t 
$$

To compute the trace we first take $t=1$:
$$ F(s) = Q(s,1) = { (s+1) \ln s + 2 (1-s) \over  (s-1)^2 }, $$
and observe that due to the trace property:
$$ 
\begin{aligned}
\hbox{Tr} \left( H^{-1} F(\Delta)(A_i) A_i H \right) &= 
\hbox{Tr} \left( A_i  F(\Delta)(A_i)  \right) \\
& = \hbox{Tr} \left(  F(\Delta^{-1})(A_i) A_i  \right). 
 \end{aligned}
$$ 
But:
$$ 
\begin{aligned}
F(\frac{1}{s})
&= { - (\frac{1}{s}+1) \ln s + 2 (1-\frac{1}{s}) \over  (\frac{1}{s}-1)^2 } \\
&= { - (s+1) \ln s - 2 (1-s) \over (s-1)^2} \\
&= - F(s),
\end{aligned} 
$$
and therefore 
$$  \hbox{Tr\ } R(H,A^2) = 0, $$
so the quadratic term vanishes as well.

\section{The Gauss-Bonnet theorem}

The term which does not depend on $A$  is a total derivative term:

$$ \frac{1}{3} \delta_i \left( H^{-1} \delta_i(H) \right) $$

Since the trace is closed on the torus this contribution to the Gauss-Bonnet term vanishes. For the linear terms as well as for the quadratic we have already demonstrated that taking the trace gives $0$, similarly as for the term depending on the derivative of  $A$. Hence the conclusion:

\begin{prop}
For the matrix conformally rescaled Dirac operator on the 
two-dimensional torus,
$D_h = h D h$, where $h$ is a globally diagonalizable positive matrix,
the Gauss-Bonnet theorem holds:
$$ \zeta_{D_h}(0) = \zeta_{D}(0). $$
\end{prop}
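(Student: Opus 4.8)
The plan is to extract $\zeta_{D_h}(0)$ from the pseudodifferential parametrix computed above and to reduce the statement to the vanishing of a single integrated curvature density. First I would use the reduction recorded in the introduction: since $h=UHU^*$ with $H$ diagonal and $U$ unitary, the spectrum of $D_h=hDh$ equals that of $D_{A,H}=H(D+A)H$ with $A=U^*[D,U]$, so $\zeta_{D_h}=\zeta_{D_{A,H}}$. On a closed surface the heat/resolvent expansion of the Laplace-type operator $D_{A,H}^2$ gives, at $s=0$,
\[
\zeta_{D_h}(0) = \int_{\T^2}\hbox{Tr}\,R - \dim\ker D_h,
\]
where $R$ is the local curvature functional of Section 3 and the last term is the kernel contribution (the $+1$ of the introduction). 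Because $h$ is positive and invertible, $hDh\,\psi=0$ iff $D(h\psi)=0$, so $\psi\mapsto h\psi$ identifies $\ker D_h$ with $\ker D$ and hence $\dim\ker D_h=\dim\ker D$; the same expansion for the flat $D$ (where $H=\id$, $A=0$, so the density $R$ vanishes identically) gives $\zeta_D(0)=-\dim\ker D$. Subtracting, the topological kernel terms cancel and the theorem becomes equivalent to the single identity $\int_{\T^2}\hbox{Tr}\,R=0$.

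To produce $R$ I would follow the calculus already set up: form the symbol $\sigma_{D_{A,H}^2}=a_2+a_1+a_0$ of \eqref{bsym}, invert it order by order through \eqref{bsym2} to obtain the sub-leading parametrix symbol $b_2$, and integrate its homogeneous piece over $\xi\in\R^2$ using the elementary radial integral displayed before \eqref{RH}. The decisive simplification, as emphasised above, is that $H$ commutes with every $\delta_i(H)$ and with $b_0=(1+H^4\xi^2)^{-1}$, so the only noncommutativity left is carried by the one-form $A$ and by the ordering of the $H$-factors. This lets me split $b_2$, and hence $R$, into exactly the four pieces organised in Section 3: the part built only from $H$ and $\delta_i(H)$, the part linear in $A_i$, the part linear in $\delta_i(A_i)$, and the part quadratic in $A_i$. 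The radial integrations are routine; the noncommutative bookkeeping of the surviving $H$-factors I would handle with the Lesch rearrangement lemma \cite{Le14}, which rewrites each integrated term as a spectral function ($G$, $F$, or $Q$) of the modular operator $\Delta(x)=H^{-4}xH^4$ applied to $A$.

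The heart of the proof is then to show that each of the four traced contributions vanishes. The $A$-independent term is the total derivative $\tfrac{1}{3}\,\delta_i\!\bigl(H^{-1}\delta_i(H)\bigr)$ of \eqref{RH}, so its integral over the torus vanishes, the trace being closed. For the remaining three pieces the vanishing is enforced, under the trace, by the $\Delta\leftrightarrow\Delta^{-1}$ symmetry built into the rearrangement lemma: the two linear terms $R^{(1)}$ and $R^{(2)}$ carry the \emph{same} function $G$ and cancel upon tracing, regardless of the value $G(1)=\tfrac{2}{3}$; the $\delta_i(A_i)$ term vanishes because its function satisfies $F(1)=0$; and the quadratic term vanishes because, specialising the two-variable function to $F(s)=Q(s,1)$, one finds $F(1/s)=-F(s)$, so that $\hbox{Tr}\,R(H,A^2)$ equals its own negative under the trace identity $\hbox{Tr}\bigl(F(\Delta)(A_i)A_i\bigr)=\hbox{Tr}\bigl(F(\Delta^{-1})(A_i)A_i\bigr)$. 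Combining the four vanishings gives $\int_{\T^2}\hbox{Tr}\,R=0$, hence $\zeta_{D_h}(0)=\zeta_D(0)$. I expect the genuine obstacle to be twofold: the long but mechanical matrix symbol computation leading to $b_2$, and, more conceptually, arranging each integrated contribution so that the modular $\Delta\leftrightarrow\Delta^{-1}$ symmetry of the trace can actually be invoked — the whole cancellation hinges on the rescaling matrix being diagonal, which is precisely what makes $H$ commute with its derivatives and keeps this reorganisation feasible.
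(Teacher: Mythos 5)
Your proposal follows the paper's proof essentially step for step: the same reduction from $D_h$ to $D_{A,H}=H(D+A)H$, the same symbol computation and order-by-order inversion to extract $b_2$, the same four-part decomposition of the curvature density, and the same four cancellation mechanisms (total derivative $\tfrac13\delta_i(H^{-1}\delta_i(H))$ for the $H$-only part, the coincidence of the function $G$ for the two terms linear in $A$, $F(1)=0$ for the $\delta_i(A_i)$ term, and the antisymmetry $F(1/s)=-F(s)$ under the trace for the quadratic term). Your only addition is the explicit treatment of the zero-mode contribution via the identification $\ker D_h \cong \ker D$, $\psi \mapsto h\psi$ --- a point the paper leaves implicit; this is correct and in fact slightly more careful than the paper's own presentation.
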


\section{Matrix Gauss-Bonnet for an arbitrary two-dimensional manifold}

Let $M$ be a closed, connected,  two-dimensional Riemannian manifold 
and $D$ a Dirac operator for a fixed metric $g$ on $M$.  
Consider the operator
$$ D_{H,A} = H(D + A)H, $$
for $H$ a diagonal matrix valued function on $M$ and $A$ a matrix-valued one-form (identified here with their Clifford image).

We again aim to compute the value of $\zeta_{D_{H,A}^2}(0)$
using again the methods of pseudodifferential calculus. Of course,
using the pseudodifferential calculus on a compact Riemannian
manifold requires some care, as the formulation depends on local
coordinates and requires patching together local data (see \cite{NSSS}
for a recent review and literature) using partition of unity.

In particular, for a curved manifold the expressions for the product of
the symbols that were used in (\ref{bsym2}) become
complicated (even using local charts and local coordinates)
as the metric tensor depends on them. An example of the complexity
for the products in the pseudodifferential calculus that uses normal
symbols (based on the normal coordinates) is given in \cite{MP}.
In our case, however, we are interested only in the contributions that
contain the term $A$ and therefore we can easily use the local
arguments. Moreover, due to seminal results of Guillemin and
Wodzicki \cite{Gu,Wo}, the trace of the integral over the cotangent space of a
classical symbol of order $-2$ provides a local density
on a $2$-dimensional manifold and thus our computations
are indeed coordinate independent.

Let us denote the (local) symbols of $D^2_H$ as:
$$ D_H^2 = (HDH)^2 = a_2^H + a_1^H + a_0^H, $$
and the symbols of $D^2$ alone as:
$$ D^2 = a_2^o + a_1^o +a_0^o. $$

Similarly, like in the case of the torus we split the computation
into the case of terms not depending on $A$, linear in $A$
and quadratic in $A$.

\subsection{Terms independent of $A$}

As the matrix $H$ is diagonal, we can treat the case as $H$ were 
a scalar function. Thus the problem is reduced to the problem of a 
usual conformal rescaling of the classical Dirac operator. 

It is well known that for any conformal rescaling of the metric the
Gauss-Bonnet theorem holds, therefore the contribution of the
part of $b_2$ that does not contain $A$ guarantees that the value
of the zeta function will remain unchanged provided that all 
contributions depending on $A$ shall vanish. We examine the
linear and quadratic contributions at each point $x \!\in\! M$ of the 
manifolds using  normal coordinates.

\subsection{Terms linear in $A$}

Linear terms do arise in $b_2$ from the following terms (using 
local coordinates in a given chart):
$$ 
\begin{aligned}
&b_0 a_1^H b_0 a_1(A) b_0  
+ \partial_k^\xi(b_0) \partial^x_k(a_2^H) b_0 a_1(A) b_0
+ b_0 a_1(A) b_0 a_1^H b_0 \\
&- b_0 a_0(A) b_0 
- \partial_k^\xi(b_0) \partial^x_k(a_1(A)) b_0 
- \partial_k^\xi(b_0 a_1(A) b_0) \partial^x_k(a_2^H) b_0. 
\end{aligned}
$$  
where by $a_1(A), a_0(A)$ we denote the terms containing linear
$A$ in $a(D_{H,A}^2)$, respectively. 

First of all, observe that the expression in much simpler as it involves
only (at most) first-order derivatives. As to compute the density we 
can use any coordinate system let us choose the normal coordinates 
around $x \!\in\! M$ with respect to the metric $g$. First, the terms 
without derivatives reduce 
easily to the torus case (at point $x$). The only difficulty arises
from terms with derivatives with respect to normal coordinates,
that is, $ \partial^x_k(a_2^H)$ and $\partial^x_k(a_1(A))$.

However, since $a_2^H = H^4 g_{ij} \xi^i \xi^j$, then we use the fact
that normal coordinates the first derivatives of the metric vanish at 
the point $x$ and therefore the only remaining term would be the 
derivative of $H^4$.  Therefore, the term that contains the derivative
of $a_2^H$ would be reduced to the term linear in $A$ from the torus 
case (a point $x$ and with the derivatives taken with respect to the
normal coordinates).

Similar argument works also for the other term, $a_1(A)$, which reads:
$$ \left( H^3 A_i H + H A_k H^3 \right) \sigma^k \sigma^i \xi_i, $$
and because in the term $\partial_k^\xi(b_0) \partial^x_k(a_1(A)) b_0 $
there are no further $\sigma$ matrices we can compute first the 
trace over the Clifford algebra and rephrase is as
$$\frac{1}{2} \left( H^3 A_i H + H A_k H^3 \right) g^{ki} \xi_i, $$
obtaining again the metric. Hence, in normal coordinates at point $x$ 
the expression is again identical (in the sense of the dependence on
$A$ and $H$) to the one for the flat torus. 

\subsection{Quadratic terms}

Now, let us concentrate on quadratic terms in $A$ in
the formula for $b_2$. They can arise only from 
two terms and are:

$$ b_0 a_1(A) b_0 a_1(A) b_0 - b_0 a_0(A^2) b_0, $$

where $a_1(A)$  denotes the part of this symbol $a_1$
which contains a term linear in $A$ and $a_0(A^2)$
similarly denotes part of $a_0$ symbol containing the
quadratic term. Note that this case is even simpler as there
are no derivates and  it is easy to see that when
written in normal coordinates the expression becomes:

$$ a_1(A) = \left( H^3 \sigma^j \xi_j (\sigma^i A_i) H 
             + \sigma^i H A_i H^3 \sigma^j \xi_j \right), $$
and
$$ a_0(A) = (\sigma^i H A_i H) (\sigma^k H A_k H), $$
which again is naturally the same as in the case of torus.

\subsection{The local density and the Gauss-Bonnet}

As we have pointed out earlier, the symbol that we compute
is of order $-2$ and integrated over the variables $\xi$ contributes 
to a local density, so we can compute the value at each point in
any coordinate system. Using the normal coordinate system we
have shown that at each point $x$ using the local normal coordinate
systems  the dependence on the $A$ term is given through 
identical expressions as in the case of the flat torus. Therefore,
using the arguments from previous section, we see that
at each point $x \!\in\! M$ the linear and quadratic terms in $A$ 
give no contributions to the local density and hence to the 
value of the zeta function of $D_H^2$. We note in passing that
the use of normal coordinates in similar problems related to
the computations of the density of Wodzicki residue was used,
for example, in \cite{KW}.

On the other hand, since the matrix-valued function $H$ is assumed 
to be diagonal, the $H$ dependent terms can be treated in the
same way as the scalar modification of the Dirac operator, that is,
a case where $H$ is just a function on $M$. The latter case, however,
obviously does not change the Gauss-Bonnet term, hence as 
a consequence, conbining these two results we conclude that the
Gauss-Bonnet theorem holds for the arbitrary conformal rescaling
of the Dirac operator over a two-dimensional Riemann surface if
the rescaling matrix is diagonalizable. We shall briefly discuss in the
next section when such situation is possible.

\section{Diagonalizability of matrix functions}

Having demonstrated that for the special case of globally 
diagonalizable matrix the Gauss-Bonnet theorem holds, 
a natural question arises as to what extent the diagonalizability 
condition  $h= UHU^*$ is general. Is it always possible to find the unitary $U$ ? In what follows we analyze the question in more detail.

Let $X$ be a compact Hausdorff space and let $H: X \to M_n (\mathbb{C})$ be a continuous  map  with values in positive definite matrices. We also assume that for all $x \in X$, $H (x)$ has simple spectrum. A natural question is  if $H$ can be continuously diagonalized. That is,  if there is a decomposition $h = UHU^*$ with  $h$ diagonal, $U$ unitary,  and both  
$h$ and $U$ continuous.  A similar question has been studied for normal 
and selfadjoint maps in \cite{FP} where obstructions to continuous diagonalizability 
are identified. In our case of positive definite matrices, these obstructions are much 
easier to identify and the necessity of their vanishing are directly proved in this section. 

We give now the complete obstruction for diagonalizability of $H$ in terms of 
first Chern classes. Given a map $H: X \to M_n (\mathbb{C})$ as above, let  
$\lambda_1 (x) < \lambda_2(x) <\dots < \lambda_n(x), x \in X$ denote 
the  eigenvalues of $H(x)$. Since $H$ is continuous and has a simple real  spectrum,  
$\lambda_i'$s are continuous functions on $X$. Let $E_i(x)$
denote the corresponding eigenspaces. We obtain complex line bundles $L_i$ on $X$,
$$ L_i \subset X \times \mathbb{C}^n= \{ (x, v); v \in E_i(x)\}. $$  
It is clear that $H$ is continuously diagonalizable if and only if the line bundles $L_i$ 
are trivial for all $i$. The obstruction for triviality of complex  line bundle $L$  
is therefore given by its  first Chern class: $L$ is trivial if and only if $c_1 (L)=0$. Thus 
the obstruction to diagonalizability of $H$ lies in 
$\oplus_{i=1}^n H^2 (X, \mathbb{Z})$ so that $H$ is diagonalizable if and only if
$$ c_1(L_i) =0, \quad \quad 1\leq i \leq n.$$

In particular we get the following:
\begin{prop} Let $X$ be a compact connected  Hausdorff space  such that 
$H^2 (X, \mathbb{Z})=0.$ 
Then any continuous map $H: X \to M_n (\mathbb{C})$  with values in positive definite 
matrices with simple spectrum is continuously diagonalizable. 
\end{prop}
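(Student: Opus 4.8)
The plan is to show that the hypothesis $H^2(X,\Z)=0$ forces every one of the line bundles $L_i$ to be trivial, and then to translate triviality of the $L_i$ back into the existence of a continuous unitary $U$ diagonalizing $H$. The essential groundwork has already been laid before the statement: since $H(x)$ has simple spectrum for every $x$, the eigenvalues $\lambda_1(x)<\cdots<\lambda_n(x)$ vary continuously, and the assignment $x\mapsto E_i(x)$ of the $i$-th eigenspace defines a continuous complex line subbundle $L_i\subset X\times\C^n$. I will take for granted the equivalence already recorded there, namely that $H$ is continuously diagonalizable if and only if each $L_i$ is trivial.

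First I would recall the classical fact that over a paracompact (in particular compact) Hausdorff space the first Chern class is a complete invariant of complex line bundles: the map $c_1\colon \mathrm{Pic}(X)\to \check H^2(X,\Z)$ is a bijection. This follows from the homotopy classification $\mathrm{Vect}^1_\C(X)\cong[X,BU(1)]$ together with the identification $BU(1)\simeq\C P^\infty=K(\Z,2)$, which identifies $[X,BU(1)]$ with $\check H^2(X,\Z)$. Consequently a line bundle $L$ over $X$ is trivial precisely when $c_1(L)=0$.

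Next I would apply this to each $L_i$. Since $H^2(X,\Z)=0$ by hypothesis, every class in $H^2(X,\Z)$, and in particular each $c_1(L_i)$, vanishes, so every $L_i$ is trivial. Triviality lets me choose a global continuous nowhere-vanishing section $s_i$ of $L_i$; after normalizing, $u_i(x)=s_i(x)/\Vert s_i(x)\Vert$ is a continuous unit eigenvector field for $\lambda_i$. Because $H(x)$ is positive definite and the $\lambda_i(x)$ are pairwise distinct, the vectors $u_1(x),\dots,u_n(x)$ are automatically orthonormal, so the matrix $V(x)$ with columns $u_1(x),\dots,u_n(x)$ is unitary and continuous, and $U(x):=V(x)^*$ gives $U(x)H(x)U(x)^*=\mathrm{diag}(\lambda_1(x),\dots,\lambda_n(x))$, which is continuous and diagonal. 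This is exactly the decomposition $h=UHU^*$ sought in the statement.

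The only genuinely non-formal point, and the step I would treat most carefully, is the classification of line bundles by $c_1$ over a general compact Hausdorff space: here $H^2$ must be read as \v{C}ech cohomology, and it is the isomorphism $\mathrm{Pic}(X)\cong\check H^2(X,\Z)$ that lets the hypothesis $H^2(X,\Z)=0$ be applied. If $X$ is assumed to be a finite CW complex or a manifold this is entirely standard and \v{C}ech and singular cohomology agree. The role of connectedness is minor: it guarantees that the continuous, never-colliding eigenvalues keep a fixed global ordering, so that the $L_i$ are unambiguously defined on all of $X$.
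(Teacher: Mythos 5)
Your proposal is correct and follows essentially the same route as the paper: the paper's argument is precisely that the eigenspaces define line bundles $L_i$, that continuous diagonalizability is equivalent to triviality of all the $L_i$, and that triviality is detected by $c_1(L_i)\in H^2(X,\mathbb{Z})=0$. The extra details you supply -- the classification $\mathrm{Pic}(X)\cong \check H^2(X,\mathbb{Z})$ via $BU(1)\simeq K(\mathbb{Z},2)$ over a compact Hausdorff space, and the explicit assembly of the unitary $U$ from normalized sections (orthonormal because eigenvectors of a positive definite matrix for distinct eigenvalues are orthogonal) -- are correct elaborations of steps the paper treats as known.
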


It should be noted that if $X$ is a smooth manifold and $H$ is smooth as well, then the roots 
$\lambda_i(x)$ will be smooth functions on  $X$. It follows that the line bundles $L_i$ will be smooth and in that case $H$ will be smoothly diagonalizable, provided of course 
$H^2(X, \mathbb{Z})=0.$

On the other hand it is not difficult to give examples to show that continuous 
diagonalizability is not always possible. The simplest example is a sphere with the 
function
$$ F: S^2 \rightarrow M_2(\mathbb{C}), 
\quad F(x_1, x_2, x_3)=x_1\sigma_1 +x_2\sigma_2 +x_3\sigma_3,$$ 
where $\sigma_1, \sigma_2, \sigma_3$ are the Pauli spin matrices. They are selfadjoint and satisfy the relations $\sigma_i \sigma_j + \sigma_j \sigma_i = 2 \delta_{ij}. $ 
Then $F^2 (x)=1$, the identity matrix,  for all $x\in S^2$ and
therefore
$p=\frac{1+F}{2}$ is a
projection  in $M_2(C(S^2))$.  We have
$$ p(x_1, x_2, x_3)= \frac{1}{2} \left(
\begin{matrix}
1 + x_3 & x_1 + ix_2 \\
x_1-ix_2 & 1-x_3
\end{matrix}
\right).
$$
Thus $p$ has rank 1 and hence defines a complex line bundle over $S^2$. This line bundle is non-trivial in the sense that it admits no nowhere zero section, hence $p$  cannot be continuously diagonalized.  In fact,   it  can be shown that this  line bundle  is the line bundle associated to the Hopf fibration
$$ S^1 \to S^3 \to S^2,$$
  which  has no nowhere zero section hence cannot be a trivial line bundle.   Alternatively, the first  Chern class of this bundle can be explicitly computed as
$$ c_1 (p)= \frac{1}{2\pi i} \text{Tr}(p\, dp\, dp) = 
-\frac{1}{4 \pi} (x_1 dx_2 dx_3 -x_2dx_1 dx_3 +x_3 dx_1 dx_2),$$ 
which is  a multiple of the volume form of the round sphere. In particular $\int_{S^2} c_1(p)=-1$.  Now to get a non-diagonalizable positive definite map, let 
$H = 1+p.$  It has  a simple spectrum for all $x \in S^2$ and is positive definite. Since $p$ is not continuously diagonalizable, it follows that $H$ is not continuously diagonalizable either. 

In fact, one can extend the result to all compact Riemann surfaces.

\begin{prop}
If $X$ is a compact Riemann surface then there exists a non-diagonalizable
smooth positive $2 \times 2$ matrix. 
\end{prop}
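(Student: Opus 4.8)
The plan is to construct, on an arbitrary compact Riemann surface $X$, a smooth positive definite $2\times 2$ matrix-valued function whose eigenline bundles are nontrivial, so that by the Chern class obstruction developed above it cannot be continuously (hence not smoothly) diagonalized. The strategy is to pull back the sphere example via a suitable map $X \to S^2$ and then repair the map to be a genuine immersion-free smooth map of nonzero degree, so that the pullback line bundle still has nonzero first Chern class.

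First I would produce a smooth map $\varphi: X \to S^2$ of degree one. Every compact orientable surface admits such a map: collapsing the complement of an embedded disc to a point gives a continuous degree-one map $X \to S^2$, which can be smoothed without changing its degree. Given $\varphi$, set $p = \varphi^* \left(\tfrac{1+F}{2}\right)$, the pullback of the rank-one projection from the sphere example; concretely $p(x) = \tfrac{1}{2}\bigl(1 + \sum_j \varphi_j(x)\sigma_j\bigr)$, where $(\varphi_1,\varphi_2,\varphi_3)$ are the coordinate components of $\varphi$. Then $p$ is a smooth projection in $M_2(C^\infty(X))$, and its associated line bundle $L = \varphi^* \mathcal{L}$ satisfies, by naturality of the Chern class and of integration under pullback,
$$ \int_X c_1(L) = \deg(\varphi) \int_{S^2} c_1(\mathcal{L}) = \deg(\varphi)\cdot(-1) = -1 \neq 0. $$
Finally put $H = 1 + p$. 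This is smooth, selfadjoint, and positive definite, with eigenvalues $1$ and $2$; since these are distinct, $H$ has simple spectrum at every point, and its eigenspace for eigenvalue $2$ is exactly the range of $p$, i.e.\ the line bundle $L$. Because $c_1(L) \neq 0$, the bundle $L$ is nontrivial, so by the obstruction criterion $H$ is not continuously diagonalizable, and a fortiori not smoothly diagonalizable.

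The main obstacle is the construction of a smooth degree-one map $X \to S^2$ and the verification that its degree is indeed nonzero; once this is in hand, everything else is a direct transcription of the sphere computation through the functoriality of pullbacks. The cleanest way to handle this step is to invoke the standard fact that collapsing the boundary of a top cell gives a degree-one map to the sphere, or equivalently that any closed oriented $n$-manifold admits a degree-one map onto $S^n$; smoothing such a map preserves its (integer) degree. Alternatively, one can use a conformal or meromorphic coordinate chart to build the map explicitly near a point and extend it, but the collapse construction is simplest and avoids any dependence on the complex structure of $X$ beyond its orientation.
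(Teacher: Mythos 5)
Your proposal is correct, but it takes a genuinely different route from the paper. You pull back the sphere projection $p=\tfrac{1}{2}(1+\sum_j \varphi_j\sigma_j)$ along a smooth degree-one map $\varphi: X \to S^2$ (obtained by collapsing the complement of an embedded disc and smoothing), and then use naturality of the first Chern class together with the degree formula $\int_X \varphi^*\omega = \deg(\varphi)\int_{S^2}\omega$ to conclude that the eigenline bundle of $H=1+p$ has $c_1$ integrating to $-1$, hence is nontrivial. The paper instead works constructively: it writes down an explicit Loring-type projection on a cylinder $S^1\times(0,1)$ (embedded as a tubular neighborhood of a circle in $X$), built from functions $f,g,h$ subject to $gh=0$ and $f^2+g^2+h^2=f$, computes $c_1(p)=\tfrac{1}{2\pi i}\mathrm{Tr}(p\,dp\,dp)$ directly, arranges the supports so that the integral equals $-1$, and extends $p$ to all of $X$ using the fact that it is constant near the ends of the cylinder. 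Your approach is shorter and conceptually cleaner, but it leans on standard topological machinery (existence of degree-one maps onto $S^n$, smoothing of continuous maps, homotopy invariance of degree, functoriality of $c_1$); the paper's construction is self-contained and explicit, producing concrete matrix entries, at the cost of a hands-on Chern class computation. One cosmetic remark: your phrase ``immersion-free smooth map'' is confusing --- you only need a smooth map of nonzero degree (indeed, for genus $\geq 1$ such a map necessarily has critical points), so you should simply say ``smooth map of degree one.''
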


\begin{proof}
To show that we shall find a hermitian projection $p$ that has a nontrivial Chern class and therefore cannot be diagonalized globally. Then, similarly as above, $1+p$ is the desired positive non-diagonalizable matrix. The
construction below extends the method of finding an explicit nontrivial
projection on the torus as shown in \cite{Lo86}.

Let us take a product $S^1 \times (0,1)$ and define the following matrix valued
function:
$$ p = \left( \begin{array}{cc}
f(t) & h(t) + g(t) e^{2\pi i s} \\
h(t) + g(t) e^{-2\pi i s}  &1- f(t) \end{array} \right),
$$
where $0 < t < 1$ parametrizes the interval and $0 \leq s <1 $ parametrizes the
circle. The matrix $p(t,s)$ is a projection iff:
$$ g(t) h(t) = 0, \qquad f(t)^2 + g(t)^2 +h(t)^2 = f(t). $$ 

The first Chern class of $p$ could be explicitly computed as:
$$ 
\begin{aligned}
c_1 (p) &= \frac{1}{2\pi i} \text{Tr}(p\, dp\, dp) = (4 g g' f - 4 g^2 f' -2 g g') \, dt \wedge ds \\
& = \left( 2 (g^2)' f - 4 g^2 f' - (g^2)' \right)  \, dt \wedge ds .
\end{aligned}
$$

Now, choose the functions $g,h$ in such a way, so that support of $g$ is 
in $(\epsilon, \frac{1}{2})$ and support of $h$ in $(\frac{1}{2}, 1-\epsilon)$ and they vary 
between $0$ and $1$. Further, take $f$ to be increasing on support of 
$g$ from $0$ to $1$ and decreasing on support of $h$, we can check that 
the integral of the above form:
$$ 
\int_{\hbox{supp}(g)} \left( 2 (g^2)' f - 4 g^2 f' - (g^2)' \right)  \, dt \wedge ds = -1,
$$
so that the projection $p$ is nontrivial.

Take now an arbitrary Riemann surface and find a circle and its tubular
neighborhood. Then, as this tubular neighborhood is diffeomorphic with $(0,1) \times S^1$
we can use that diffeomorphism to define a matrix value map on it, which arises
from the projection $p$. However, note that the projection $p(t,s)$ is constant:
$$ p = \left( \begin{array}{cc}
0 & 0 \\
0 & 1
\end{array} \right), $$
for $t< \epsilon$ and $t> 1 - \epsilon$, and so is its image. Therefore we can extend it in a smooth way to a matrix valued map on the entire Riemann surface. As the Chern number
of the projection does not change it defines a nontrivial line bundle. Taking $1+p$
we obtain a positive matrix (with constant eigenvalues $1$ and $2$), so that both
eigenspaces are nontrivial line bundles.
\end{proof}

The above result does not generalize in a natural way to higher dimensional manifolds, and we can formulate a question as follows. 
For a compact manifold $X$ of dimension bigger than $2$, if $H^2 (X, \mathbb{Z})$ is non-trivial, can one always find a positive matrix-valued $H$, which is not diagonalizable?
If so, what is the minimal size of such a matrix?
Another interesting problem is of different type: 
having a smooth matrix valued function $H$ on a manifold $X$, is there
a Chern-Weil type description of obstructions to diagonalizability (which are obviously related
to classes $c_1 (L_i)$) in terms of differential forms obtained from $H$?

Finally, let us mention that although for the 2-dimensional manifolds the action remains purely topological, the study of the matrix-conformal 
rescaling in the higher-dimensional case leads to a question of the minima of the corresponding Einstein-Hilbert functional and the corresponding 
equations of motion. 

\vfill

{\bf Acknowledgements:} The authors thank the referee for careful of the manuscript reading and remarks that improved the presentation. The research was partially supported through  H2020-MSCA-RISE-2015-691246-QUANTUM DYNAMICS and through Polish support	grant for the international cooperation project 3542/H2020/2016/2 and 328941/PnH/2016.
\vfill

\end{document}